\documentclass[11pt,a4paper,reqno]{amsart}

\usepackage[T1]{fontenc}
\usepackage[utf8]{inputenc}
\usepackage{lmodern}
\usepackage[a4paper,left=28mm,right=28mm,top=24mm,bottom=24mm,headheight=13pt,headsep=6mm]{geometry}
\usepackage{amsmath,amssymb,amsthm,mathtools}
\usepackage{microtype}
\usepackage{enumitem}
\usepackage[hidelinks]{hyperref}
\numberwithin{equation}{section}

\hypersetup{
  pdftitle={Regular Bundles on Orbifolds: A Short Proof of Presentability},
  pdfauthor={Enrique Becerra and Ernesto Lupercio},
  pdfsubject={Regular bundles and quotient presentations of smooth orbifolds},
  pdfkeywords={orbifold, regular representation, vector bundle, Adams operation, global quotient, Lie groupoid}
}

\newtheorem{theorem}{Theorem}[section]
\newtheorem{lemma}[theorem]{Lemma}
\newtheorem{proposition}[theorem]{Proposition}
\newtheorem{corollary}[theorem]{Corollary}
\theoremstyle{remark}
\newtheorem{remark}[theorem]{Remark}
\theoremstyle{plain}
\newtheorem*{maintheorem}{Theorem A}
\newtheorem*{maincorollary}{Corollary B}

\newcommand{\X}{\mathcal X}

\newcommand{\C}{\mathbb C}
\newcommand{\Vect}{\operatorname{Vect}}
\newcommand{\Res}{\operatorname{Res}}
\newcommand{\id}{\operatorname{id}}

\newcommand{\Fr}{\operatorname{Fr}}

\title[Regular bundles on orbifolds]
  {Regular Bundles on Orbifolds: A Short Proof of Presentability}

\author{Enrique Becerra}
\address{Departamento de Matem\'aticas, CINVESTAV,
07360 Ciudad de M\'exico, Mexico}
\email{ebecerra@math.cinvestav.mx}

\author{Ernesto Lupercio}
\address{Departamento de Matem\'aticas, CINVESTAV,
07360 Ciudad de M\'exico, Mexico}
\email{lupercio@math.cinvestav.mx}

\subjclass[2020]{Primary 57R18; Secondary 19L20, 22A22, 57R22}
\keywords{orbifold, regular representation, vector bundle, Adams operation, global quotient, Lie groupoid}
\date{}

\begin{document}

\begin{abstract}
Let $\X$ be a Hausdorff second-countable smooth orbifold without boundary,
possibly noncompact and ineffective. Suppose $\dim\X\leq n$ and $|G_x|\leq B$
for integers $n\geq0$ and $B\geq1$, where $G_x$ is the full stabilizer at $x$.
We construct a smooth Hermitian bundle of explicit rank $R(n,B)$ whose fibre
at $x$ is a positive multiple of $\C[G_x]$. Adams operations cancel the Bott
obstructions on the boundary spheres of a locally finite good triangulation,
and the connectivity of Stiefel manifolds gives actual bundles representing
the resulting virtual classes.
Smoothing preserves all stabilizer representations. Unitary frames give
$\X\simeq[M/U(R(n,B))]$ for a smooth manifold $M$ with a proper locally free
action; $M$ is compact exactly when $|\X|$ is compact. Thus every compact
smooth orbifold is presentable.
\end{abstract}

\maketitle

\section{Introduction}\label{sec:introduction}

An orbifold is \emph{presentable} if it is equivalent to $[M/K]$, where a
compact Lie group $K$ acts smoothly, properly, and locally freely on a
manifold $M$. For an effective orbifold of fixed dimension, the orthonormal
frame bundle gives such a presentation. Ineffective isotropy is invisible to
this construction because it acts trivially on the tangent bundle.

A complex bundle $E$ on $\X$ is \emph{regular} if each fibre $E_x$ is a
positive multiple of the regular representation $\C[G_x]$ of the full
stabilizer. A regular bundle is faithful, and its unitary frame bundle removes
the stabilizers of an orbifold presentation. Regular representations restrict compatibly along subgroup injections, but
their clutching data need not extend across cell boundaries. On a noncompact
orbifold the rank must also be independent of the number of cells.

For integers $n\geq0$ and $B\geq1$, put
$L=\operatorname{lcm}(1,\ldots,B)$ and
$t(n)=\max\{0,\lfloor(n-1)/2\rfloor\}$. Set $R(n,1)=1$ and, for $B\geq2$,
\begin{equation}\label{eq:rank}
 R(n,B)=L\prod_{s=1}^{t(n)}\bigl((1+L)^s-1\bigr),
\end{equation}
with empty products equal to $1$.

\begin{samepage}
\begin{maintheorem}
Let $\X$ be a Hausdorff second-countable smooth orbifold without boundary,
possibly ineffective. If $\dim\X\leq n$ and $|G_x|\leq B$ for every full
stabilizer, there is a smooth Hermitian complex bundle $E$ of rank $R(n,B)$
such that, as a $G_x$-representation for every $x\in\X$,
\[
 E_x\cong\C[G_x]^{\oplus R(n,B)/|G_x|}.
\]
\end{maintheorem}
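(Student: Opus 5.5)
We plan to build $E$ cell by cell over a locally finite good triangulation of $|\X|$, compatible with the orbifold structure: on the open star of each simplex $\sigma$ the orbifold is a quotient $[U_\sigma/G_\sigma]$, and the isotropy is constant along the interior of $\sigma$. Over a point with stabilizer $G_x$ we prescribe the fibre $\C[G_x]^{\oplus R/|G_x|}$, with $R=R(n,B)$. This is an integer because $|G_x|\le B$ divides $L$, and $L$ divides $R$. Face inclusions $\tau<\sigma$ give injections $G_\sigma\hookrightarrow G_\tau$, and $\Res^{G_\tau}_{G_\sigma}\C[G_\tau]$ is a multiple of $\C[G_\sigma]$. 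So the regular representation is compatible along faces, and the only issue is global clutching. Thinking of the barycentric dual decomposition, a neighbourhood of each simplex looks like $D^{k}\times[V/G_\sigma]$, where $D^k$ is a disk transverse to the stratum. We proceed by induction on skeleta of the dual cell structure, equivalently by increasing $k$, up to $k\le n$.

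Suppose $E$ is built as a $G_\sigma$-equivariant bundle over $\partial D^k$, i.e.\ near the boundary sphere of the new cell. To extend across $D^k$ we need the equivariant clutching class in $\widetilde K_{G_\sigma}(S^{k})$ to vanish, after identifying it with the isotypic pieces. By equivariant Bott periodicity this class lies in $R(G_\sigma)$-multiples of the Bott class when $k$ is even, and vanishes rationally in odd degree. The paper states the obstruction is a Bott obstruction on boundary spheres. The tool is the Adams operation $\psi^{1+L}$. On the Bott class in $\widetilde K(S^{2s})$ it acts by $(1+L)^s$. On representations of groups of order $\le B$ it acts trivially, because $1+L\equiv1$ mod every element order, so $\psi^{1+L}\chi=\chi$. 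Hence $\psi^{1+L}-1$ kills the representation-ring part and multiplies the degree-$2s$ obstruction by $(1+L)^s-1$. Concretely, we replace the bundle on the skeleton by the virtual bundle $\prod_s(\psi^{1+L}-1)$ applied appropriately, or dually by a combination that scales each obstruction to zero. The factor $\prod_{s=1}^{t(n)}((1+L)^s-1)$ in $R$ records one such cancellation per even sphere dimension $2s\le n-1$. Odd spheres, and the ranks in degrees $0$ and $1$, contribute no obstruction, and the factor $L$ handles integrality of the regular multiplicities. The outcome is a virtual $G$-equivariant class on $\X$, of rank $R$, restricting to $\C[G_x]^{\oplus R/|G_x|}$ at each point.

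The next step is to turn the virtual class into an actual bundle of rank exactly $R$. Since $\dim\X\le n$ and $R$ is large compared with $n/2$ in each isotypic component, the equivariant Stiefel manifolds of isometric embeddings of a representation into a fixed bigger one are highly connected. So a virtual class of positive rank with prescribed fibres is represented by an honest bundle, with no stabilization beyond the given representation. Running this cell by cell, with the local-finiteness of the triangulation keeping the inductions compatible, we get a continuous Hermitian orbibundle. Finally we smooth it. Transition functions are equivariant continuous maps into the unitary groups of the fixed representation, i.e.\ into centralizers. We approximate these by smooth equivariant maps via averaging over $G_\sigma$. This is a small perturbation, so the isomorphism type and each stabilizer representation are unchanged.

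The main obstacle is the Adams-operation step. The issue is making $\psi^{1+L}$ act on actual clutching data compatibly across all isotropy strata simultaneously, rather than on one sphere at a time. The issue is also keeping the rank uniform, so that it does not grow with the number of cells on a noncompact $\X$. I would first attempt this by defining the operation on the equivariant $K$-theory of each closed star and checking naturality under the face restrictions. The triviality of $\psi^{1+L}$ on $R(G)$ for $|G|\le B$ is what makes this consistent.
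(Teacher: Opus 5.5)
Your outline has the right ingredients: skeletal induction, $\psi^{m}$ with $m=1+L$ fixing every stabilizer representation ring, $\psi^m\beta_s=m^s\beta_s$, Stiefel connectivity, and smoothing. The decisive operator, however, is backwards. On the boundary sphere of a $(2s+1)$-cell with group $J$, the regular bundle of rank $r_{s-1}$ already built on $\X^{2s}$ has class $[W]+\beta_s a$, with $W=\C[J]^{\oplus r_{s-1}/|J|}$ and $a\in R(J)$. Your operator gives $(\psi^m-1)([W]+\beta_s a)=(m^s-1)\beta_s a$. This has two defects:
\begin{enumerate}
\item It annihilates exactly the regular part you need to keep. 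Indeed $\psi^m-1$ has virtual rank $0$, so no product of such factors has rank $R(n,B)$.
\item It only rescales the obstruction, which never dies because $\widetilde K^0(S^{2s})\cong\mathbb Z$ is torsion-free.
\end{enumerate}
The operator that works is $T_s=m^s\id-\psi^m=c_s\id-(\psi^m-\id)$, with $c_s=m^s-1$. Your $\psi^m-1$ is what one \emph{subtracts} from $c_sE$, not what one keeps:
\[
 T_s([W]+\beta_s a)=(m^s-1)[W]+(m^s-m^s)\beta_s a=c_s[W].
\]
So $T_s$ kills the Bott component and multiplies the regular fibre by $c_s$. That is where the factor $\prod_s((1+L)^s-1)$ in $R(n,B)$ comes from, not from scaling the obstruction as your bookkeeping suggests. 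Your phrase ``a combination that scales each obstruction to zero'' points at this but never identifies it.

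The order of operations also has to change. There is no virtual class on all of $\X$ to realize at the end. The obstruction on the boundary spheres of the $(2s+1)$-cells depends on the extension already chosen over $\X^{2s}$, and virtual classes face the same extension problem as bundles. So $T_s$ must be applied to the actual bundle $E$ on $\X^{2s}$ and realized there as an actual bundle before extending.

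The compatibility across strata that you call the main obstacle is then automatic. The integral Newton polynomial writes $T_s[E]=[A]-[D]$, where $A,D$ are fixed sums of tensor products of exterior powers of $E$, hence natural under pullback and gluing. No $K$-theory of closed stars is needed. The multiplicity gap $a_\rho-d_\rho=c_s(r_{s-1}/|G|)\dim\rho\geq 2s$ and the connectivity of $V_{d_\rho}(\C^{a_\rho})$ give an equivariant embedding $D\hookrightarrow A$. So $E^\flat=A/D$ is a bundle with fibres $E_x^{\oplus c_s}$.

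The boundary multiplicity bundles of $E^\flat$ are only \emph{stably} trivial. You still need the unstable step: a stably trivial rank-$u$ bundle on $S^q$ with $2u>q$ is trivial. This is used on $S^{2s}$, and again on $S^{2s+1}$, before attaching constant regular bundles over the $(2s+1)$- and $(2s+2)$-cells.

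Finally, approximating transition functions separately does not preserve the cocycle condition. The paper instead smooths by successive pairwise patching. On each overlap it corrects by an equivariant automorphism $a=kh^{-1}$ that tends to the identity at the frontier.
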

\end{samepage}

\begin{maincorollary}
Under the same hypotheses, a Hausdorff second-countable smooth manifold $M$
admits a smooth proper locally free action of $U(R(n,B))$ and
\[
 \X\simeq[M/U(R(n,B))].
\]
The stabilizer at a frame over $x$ is conjugate to the image of $G_x$ in its
fibre representation, and $M$ is compact if and only if $|\X|$ is compact.
\end{maincorollary}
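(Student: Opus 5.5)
The plan is to take the bundle $E$ produced by Theorem A and let $M$ be the quotient of its unitary frame bundle by the orbifold groupoid. Fix a proper étale Lie groupoid $\mathcal G\rightrightarrows\mathcal G_0$ presenting $\X$, with $\mathcal G_0$ Hausdorff and second countable. Then $E$ is a $\mathcal G$-equivariant smooth Hermitian bundle of rank $R=R(n,B)$ over $\mathcal G_0$. Let
\[
\Fr(E)=\{(x,f): x\in\mathcal G_0,\ f\colon\C^R\to E_x \text{ a unitary isomorphism}\}.
\]
This is a smooth principal right $U(R)$-bundle over $\mathcal G_0$. The groupoid $\mathcal G$ acts on it on the left by $g\cdot(x,f)=(y,g_*\circ f)$ for $g\colon x\to y$, and this action commutes with the $U(R)$-action. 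Set $M=\mathcal G\backslash\Fr(E)$.

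\textbf{Freeness and smoothness of $M$.} If $g\in G_x$ fixes a frame $f$, then $g$ acts as the identity on $E_x$. By Theorem A, $E_x\cong\C[G_x]^{\oplus R/|G_x|}$ contains the regular representation, which is faithful, so $g=1$. Hence $\mathcal G$ acts freely on $\Fr(E)$. The action groupoid $\mathcal G\ltimes\Fr(E)$ is then étale, proper (being a pullback of the proper groupoid $\mathcal G$), and has trivial isotropy. It is therefore equivalent to its orbit space $M$, and $M$ is a smooth manifold. Concretely, around any point one takes a slice chart $\tilde U\to \tilde U/G_x$ of $\mathcal G_0$; then $M$ is locally $G_x\backslash(\tilde U\times U(R))$ with $G_x$ acting freely. $M$ is Hausdorff because $\mathcal G$ is proper, and it is second countable because $\Fr(E)$ is. The $U(R)$-action descends to $M$.

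\textbf{The Morita equivalence.} Consider the double action groupoid $\mathcal G\ltimes\Fr(E)\rtimes U(R)$ and quotient it by each factor in turn. Quotienting by the free proper $U(R)$-action, with orbit space $\mathcal G_0$, gives a Morita equivalence with $\mathcal G$. Quotienting by the free proper étale $\mathcal G$-action gives a Morita equivalence with $M\rtimes U(R)$. Hence $\X\simeq[M/U(R)]$.

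\textbf{Stabilizers, properness and compactness.}
\begin{itemize}[leftmargin=*]
\item \emph{Stabilizers.} $u\in U(R)$ fixes $[x,f]$ exactly when $fu=\rho_x(g)f$ for some $g\in G_x$. Thus the stabilizer is $f^{-1}\rho_x(G_x)f$, which is conjugate to the image of $G_x$ in its fibre representation. It is finite, so the action is locally free.
\item \emph{Properness.} The action is proper because $U(R)$ is compact.
\item \emph{Compactness.} The induced map $M/U(R)\to|\X|$ is a homeomorphism. The map $M\to|\X|$ is surjective, and it is proper: locally it is the projection $G_x\backslash(\tilde U\times U(R))\to\tilde U/G_x$, whose fibres are compact. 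Hence $M$ is compact if and only if $|\X|$ is compact.
\end{itemize}

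The step I expect to need the most care is the local analysis in the second paragraph: checking that the free $\mathcal G$-action really yields a Hausdorff smooth quotient, and that both quotient maps out of the double action groupoid are Morita equivalences. Both reduce, via the local linear models of the orbifold, to statements about finite groups acting freely on $\tilde U\times U(R)$.
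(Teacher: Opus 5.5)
Your proposal is correct and follows essentially the same route as the paper: the unitary frame bundle over a Hausdorff second-countable proper étale atlas, freeness from faithfulness of the regular representation, the manifold quotient $M=\mathcal G\backslash \Fr(E)$, the stabilizer formula $f^{-1}\rho(G_x)f$, the frame bundle as a Morita bibundle, and compactness through properness of $M\to|\X|$. Three small justifications need tightening. The action groupoid is a closed subgroupoid of the pullback of $\mathcal G$, not literally that pullback. Second countability of $M$ uses that the quotient map is open. Properness of $M\to|\X|$ needs closedness as well as compact fibres, for instance because $M\to M/U(R)$ is a compact-group quotient, which is the paper's argument.
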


We allow disconnected orbifolds and manifolds of locally constant dimension.
In the pure-dimensional case the frame manifold has dimension
$\dim\X+R(n,B)^2$; the formula holds componentwise in general.
Every compact smooth orbifold satisfies the two bounds, so
Corollary B gives presentability without an effectiveness assumption.

For finite-dimensional proper $G$-CW complexes, with $G$ discrete and
isotropy orders bounded, L\"uck and Oliver constructed equivariant bundles
with regular stabilizer fibres
\cite[Corollary 2.7]{LO}. Henriques and Metzler gave a sufficient
presentability criterion for noneffective orbifolds \cite{HM}.
Pardon established regular bundles for a broader class of orbispaces,
with rank depending only on bounds for dimension and isotropy order.
His proof gives no explicit rank bound
\cite[Theorem~1.1 and p.~2047]{Pardon}. We give an independent construction
of the regular bundle in the smooth orbifold case, with the explicit rank
\eqref{eq:rank}.

For $B\geq2$, take $m=1+L$. Then $\psi^m$ is the identity on each
stabilizer representation ring, and
$\psi^m(\beta_s)=m^s\beta_s$ on $\widetilde K^0(S^{2s})$.
At a $(2s+1)$-cell, the operation $m^s\id-\psi^m$ therefore kills the
reduced boundary class and multiplies the regular fibre by $m^s-1$.
Exterior powers express the resulting class as $[A]-[D]$ for actual
bundles $A,D$. The multiplicity estimate and Stiefel connectivity give an
equivariant monomorphism $D\hookrightarrow A$. Its quotient is a bundle,
and each boundary multiplicity bundle has sufficient rank for stable
triviality to imply triviality.
At the next even-dimensional stage, the boundary spheres have vanishing
reduced $K^0$, and the same multiplicity estimate permits attachment
without increasing the rank. One factor for each odd dimension at least
three gives \eqref{eq:rank}, independently of the number of cells.

Local equivariant trivializations and approximation of overlap isomorphisms
give a smooth structure on the resulting bundle without changing its stabilizer
representations, as in \cite[proof of Corollary~1.3]{Pardon}. Unitary frames
give the quotient presentation.

\section{Locally finite cells}\label{sec:cells}

An orbifold here is a smooth proper \'etale differentiable stack with
Hausdorff second-countable coarse space, possibly noncompact, disconnected,
and ineffective, but without boundary. Write $G_x$ for the full stabilizer
and $BG=[*/G]$. In this section $\Vect$ denotes continuous complex bundles of
locally finite rank; bundle maps cover the identity. Skeletal restrictions
are topological stacks, and boundary refers to chart domains, so reflection
strata in the coarse space are allowed.
Throughout Sections~\ref{sec:cells} and~\ref{sec:adams}, assume
$\dim\X\leq n<\infty$.

The effective reduction of $\X$ has a locally finite good triangulation
subordinate to any atlas: closed stars lie in chart footprints, simplices
lift homeomorphically, and the lifts form compatible finite-group
triangulations \cite[Proposition 1.2.3 and Lemmas 1.2.2, 1.2.4]{MP}. Apply
this to each open-and-closed component and retain all arrows of $\X$.
Second countability leaves at most countably many components and makes the
triangulation countable. The construction below uses only local finiteness.

\begin{theorem}[Bundle gluing]\label{thm:cells}
There are a countable locally finite triangulation $T$ of $|\X|$ of
dimension at most $n$, closed substacks
\[
 \X^j=\X\mathbin{\times}_{|\X|}|T^j|,\qquad \X^{-1}=\varnothing,
\]
and representable characteristic maps
\[
 \phi_\sigma:\sigma\times BG_\sigma\longrightarrow\X^j
 \qquad(\dim\sigma=j)
\]
with the following properties.
\begin{enumerate}[label=(\roman*)]
\item $G_\sigma$ acts trivially on $\sigma$ and is the full stabilizer on its
relative interior; $\phi_\sigma$ is an equivalence there and sends the
boundary to $\X^{j-1}$.
\item Bundles on $\X^j$ are equivalent to a bundle on $\X^{j-1}$, a
$G_\sigma$-bundle on each $j$-simplex, and specified boundary
isomorphisms. Compatible bundle maps glue.
\item The equivalence preserves direct sums, tensor products, exterior
powers, and every original stabilizer action.
\end{enumerate}
No uniform bound on $|G_x|$ is required.
\end{theorem}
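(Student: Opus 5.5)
We take $T$ to be the locally finite good triangulation of the effective reduction provided by \cite{MP}, subordinate to an atlas of finite-group charts $[U_i/H_i]$, and applied componentwise. Countability comes from second countability, and $\dim T\le n$ because simplices lift homeomorphically into chart domains of dimension at most $n$. The substacks $\X^j=\X\times_{|\X|}|T^j|$ are closed, since $|T^j|$ is closed in $|\X|$; they are topological substacks, since the pullback of a closed subset of the coarse space along the étale atlas is a closed invariant subset of the object space.

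\textbf{Characteristic maps.} For each $j$-simplex $\sigma$, choose a chart $[U/H]$ whose footprint contains the closed star of $\sigma$, and a lift $\tilde\sigma\subset U$. By the compatible finite-group triangulation property, the stabilizer $H_y$ is constant for $y$ in the relative interior of $\tilde\sigma$, and it fixes all of $\tilde\sigma$: fixed sets are closed, and the lift is a simplex of an $H$-equivariant triangulation, so any element that fixes an interior point fixes the whole simplex. Put $G_\sigma=H_y$, the full stabilizer, including the ineffective kernel because all arrows are retained. Define $\phi_\sigma$ as the composite
\[
 \tilde\sigma\times BG_\sigma=[\tilde\sigma/G_\sigma]\to[U/H]\to\X.
\]
It is representable because $G_\sigma\to H$ is injective. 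On the interior, $[\mathring{\tilde\sigma}/G_\sigma]\to[U/H]$ is an equivalence onto its image, since distinct $H$-translates of $\mathring{\tilde\sigma}$ are disjoint for an equivariant triangulation. The boundary maps into $\X^{j-1}$. This proves (i).

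\textbf{Gluing.} First establish a pushout: $\X^j$ is the pushout of $\X^{j-1}$ and $\coprod_\sigma \sigma\times BG_\sigma$ along $\coprod_\sigma\partial\sigma\times BG_\sigma$. Verify this chart-locally: over $[U/H]$, the preimage of $|T^j|$ is the $H$-invariant $j$-skeleton of a triangulation, and equivariantly it is obtained from its $(j-1)$-skeleton by attaching $H\times_{G_\sigma}\tilde\sigma$ along the boundary. Local finiteness makes the attachment a closed pushout in each chart, and the charts are compatible because the lifts form compatible triangulations. Continuous bundles satisfy descent for such closed pushouts of paracompact spaces, equivariantly in each chart, by gluing local trivializations along closed covers. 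This yields the equivalence of categories in (ii): a bundle on $\X^j$ corresponds to a triple consisting of a bundle on $\X^{j-1}$, a $G_\sigma$-bundle on each $\sigma$, and isomorphisms over $\partial\sigma$. Bundle maps glue for the same reason. Because the equivalence is given by pullback functors, and pullback commutes with $\oplus$, $\otimes$ and $\Lambda^k$ and preserves the isotropy actions on fibres, (iii) follows. Nowhere is a bound on $|G_x|$ used.

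\textbf{Main obstacle.} The hardest step is making the pushout description stack-theoretic rather than merely coarse, that is, checking that the arrow data of $\X^j$ is recovered correctly across boundary faces where the stabilizer jumps. My first approach is to work entirely in the groupoid presentation and reduce to the finite equivariant statement in a single chart, using the closed-star condition so that each simplex together with its neighbourhood lives in one chart.
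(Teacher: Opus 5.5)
Your construction of $T$ and of the characteristic maps $\phi_\sigma$, and your proof of (i), agree in substance with the paper. The gap is in (ii), at the step you yourself flag as the ``main obstacle''.

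\textbf{The pushout route does not work as stated.} You want to show chart by chart that $\X^j$ is a pushout of $\X^{j-1}$ and $\coprod_\sigma\sigma\times BG_\sigma$ along the boundaries, and then apply descent. Stacks are sheaves for open covers, and closed pushouts of spaces are not pushouts there. For example, the identity of $[-1,1]=[-1,0]\cup_{\{0\}}[0,1]$ does not factor, on any neighbourhood of $0$, through either piece. So the stackified pushout is not $\X^j$, even when all groups are trivial. You might instead mean a pushout in the 2-category of topological stacks. Proving that would include gluing maps into the classifying stack $[*/GL_r]$ along the closed cover, which is (ii) itself, so nothing is gained. This is why the paper says its proof ``does not require a pushout statement for the closed cover.''

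\textbf{The chart-local step is also underjustified.} ``Gluing local trivializations along closed covers'' does not by itself give local triviality. Trivializations over the old skeleton and over a cell disagree on the attaching locus. You must extend the prescribed boundary identification over a collar of $S^{j-1}$ by radial retraction, for induced cells $Q\times_{H_e}D^j$, to obtain equivariant bundle charts across that locus. This is the content of Lemma~\ref{lem:clutching}.

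\textbf{The missing idea for globalization is full faithfulness of restriction.} The paper argues as follows.
\begin{enumerate}[label=(\alph*)]
\item Cover $|\X|$ by open stars whose closed stars lie in chart footprints.
\item In such a chart $[U/Q]$, Lemma~\ref{lem:chart} identifies the pullback of $\phi_\sigma$ with $Q\times_{G_\sigma}\sigma$. The pulled-back principal $Q$-bundle is trivial because $\sigma$ is contractible, so the $G_\sigma$-action becomes a constant injective homomorphism. ``Compatible triangulations'' alone does not give this when $\phi_\sigma$ was defined in a different chart.
\item Clutching then turns an object of $\mathcal G_j$ into a $Q$-bundle on the open-star preimage.
\item Restriction to the old skeleton and the induced closed cells is fully faithful on every open substack, by closed-set pasting over this locally finite closed cover.
\item On common chart refinements of an overlap, full faithfulness gives a unique comparison isomorphism between the two constructed bundles, compatible with the prescribed data. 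On triple overlaps the cocycle identity holds by the same uniqueness.
\item Open descent then defines $\mathsf{Glue}_j$, quasi-inverse to $\mathsf{Res}_j$. Each arrow acts by the map glued from its actions on the cell pieces, and this is what controls stabilizer jumps on boundary faces and ineffective arrows.
\end{enumerate}
You need this argument, or an equivalent proof that both sides of (ii) are stacks over the open sets of $|\X|$ and that restriction is a local equivalence. Until then, neither (ii) nor (iii) for the quasi-inverse is established.
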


The category in (ii) is
\begin{equation}\label{eq:gluing}
 \Vect(\X^{j-1})
 \mathbin{\times^{(2)}}_{\prod_{\dim\sigma=j}
       \Vect(\partial\sigma\times BG_\sigma)}
 \prod_{\dim\sigma=j}\Vect(\sigma\times BG_\sigma),
\end{equation}
where the superscript records the boundary isomorphisms. The equivalence
follows from the closed-cell clutching in Lemma~\ref{lem:clutching}; it does
not require a pushout statement for the closed cover.

\subsection{Characteristic disks and changes of chart}

Choose a uniformizer $[U/Q]\hookrightarrow\X$. Dividing $Q$ by the kernel of
its action gives a chart of the effective reduction. For each closed simplex
$\sigma$, choose a lift $\widetilde\sigma$ in an original chart and let
$G_\sigma$ be its setwise stabilizer. The coarse projection is injective on
the lift, hence $G_\sigma$ fixes it pointwise. Conversely, an element fixing
an interior point preserves the corresponding lifted interior simplex and
therefore its closure. Thus $G_\sigma$ is exactly the full interior
stabilizer, including the ineffective kernel. The map
\begin{equation}\label{eq:characteristic}
 [\widetilde\sigma/G_\sigma]=\sigma\times BG_\sigma
 \longrightarrow[U_\sigma/Q_\sigma]\longrightarrow\X
\end{equation}
is representable because its stabilizer maps are injective. Over the
interior, the translates of the lift are disjoint and exhaust the chart
preimage, so it is an equivalence there. On the boundary we retain its actual
restriction.

\begin{remark}\label{rem:boundary}
The full restriction $\X\mathbin{\times}_{|\X|}\sigma$ need not be
$\sigma\times BG_\sigma$: stabilizers may increase on the boundary. Thus
the characteristic disk is a map into $\X$, not an identification of the
whole closed restriction with a constant-isotropy stack. Clutching uses the boundary restriction of this map.
\end{remark}

\begin{lemma}\label{lem:chart}
If $\sigma$ lies in the footprint of another uniformizer $[U/Q]$, the
pullback of \eqref{eq:characteristic} to $U$ is
$Q\times_{G_\sigma}\sigma$, for an injection $G_\sigma\to Q$, and maps to
$U$ as the induced family of lifted simplices.
\end{lemma}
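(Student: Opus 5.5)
We compute the fibre product $[\widetilde\sigma/G_\sigma]\times_{\X}U$ directly, using that both maps factor through uniformizers, and then identify it with an induced space. First we reduce to a single chart. The simplex $\sigma$ lies in the footprint of the original chart $[U_\sigma/Q_\sigma]$ and also in that of $[U/Q]$. By the local structure of proper étale stacks, every point $y\in\widetilde\sigma$ has a neighbourhood $W$ and an embedding $\lambda:W\to U$ induced by the groupoid arrows. The germ of $\lambda$ is unique up to post-composition by $Q$, and $\lambda$ intertwines the stabilizer $(Q_\sigma)_y$ with $Q_{\lambda(y)}$ through an injective homomorphism.

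The central step is to produce a single embedding $\lambda:\widetilde\sigma\to U$ lifting $\sigma\subset|U/Q|$. It is determined by the condition that it be a homeomorphism onto a lifted simplex of $U$. We get it by lifting the contractible simplex $\sigma$ through the coarse map. The triangulation is subordinate to the atlas and compatible with $[U/Q]$, since the lifts form compatible finite-group triangulations \cite{MP}. So $\sigma$ has a lift $\widetilde\sigma'\subset U$ on which the coarse projection is injective. Let $\lambda$ be the inverse of that projection composed with the projection from $\widetilde\sigma$. We check that this is realized by groupoid arrows. Near an interior point, the local arrows from $\widetilde\sigma$ to $U$ carry the interior onto some translate $q\widetilde\sigma'$, and by changing $q$ we may take it to be $\widetilde\sigma'$. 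These arrows are unique up to the pointwise stabilizer, which acts trivially on the simplex. Continuity then extends $\lambda$ to the boundary as a map into $\X$, with no change of stabilizer there (compare Remark~\ref{rem:boundary}).

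Define the injection $\iota:G_\sigma\to Q$ by transport along any interior arrow. It is injective because arrows are isomorphisms of stabilizers at an interior point, where $G_\sigma$ is the full stabilizer, including the ineffective kernel. Its image is exactly the setwise, and hence pointwise, stabilizer of $\widetilde\sigma'$. Now write down the map
\[
Q\times_{G_\sigma}\widetilde\sigma\longrightarrow [\widetilde\sigma/G_\sigma]\times_{\X}U,\qquad [q,y]\mapsto (y,\,q\lambda(y),\,\text{arrow }q\circ\lambda_y).
\]
Objects of the fibre product over $U$ are triples $(y,u,\alpha)$, where $\alpha$ is an arrow of $\X$ from $y$ to $u$. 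Such an arrow factors as $q\circ\lambda_y$ with $q\in Q$. The pair $(q,y)$ is unique up to the $G_\sigma$-action, because the arrows from $y$ to $\lambda(y)$ inside $[U/Q]$ form a torsor that we compare with $\iota(G_\sigma)$. Two cases arise. At interior points this is the definition of $\iota$. At boundary points the stabilizer can be larger, so we must argue that $(q,y)$ and $(q',y)$ giving the same arrow forces $q^{-1}q'\in\iota(G_\sigma)$. The fibre product is computed with $[\widetilde\sigma/G_\sigma]$ and not with the full restriction, so its arrows over $y$ are only those coming from $G_\sigma$. This gives a bijection. The map is a continuous bijection between spaces that are both proper over $\widetilde\sigma$, hence a homeomorphism. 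Its composite to $U$ is $[q,y]\mapsto q\lambda(y)$, which is the family $\{q\widetilde\sigma'\}$ of lifted simplices.

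The main obstacle is the boundary. There the translates $q\widetilde\sigma'$ may meet, so the map to $U$ fails to be injective. We must make sure that the fibre-product topology and the uniqueness argument do not secretly pass through the full boundary restriction. We would handle this by working entirely with arrows defined by continuity from the interior, and by using the étale local form $\Gamma_{\lambda(y)}\ltimes V$ at a boundary point to check local homeomorphism.
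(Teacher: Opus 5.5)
Your route differs from the paper's, and much of it is sound. Writing the fibre product as triples $(y,u,\alpha)$ modulo precomposition by $G_\sigma$, factoring each $\alpha$ as $q\circ\lambda_y$, and finishing by compactness and the Hausdorff property all work. Over the relative interior you do not even need coherent choices: any two arrows $y\to\lambda(y)$ differ by precomposition with an element of $G_\sigma=G_y$, so they give the same point of the quotient.

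The gap is at the boundary, where everything rests on ``continuity then extends $\lambda$ to the boundary''. What must be extended is not the map $\lambda$, which is already defined on $\widetilde\sigma$ via the coarse lift. It is the family of arrows $\lambda_y$. At a boundary point $y$ the arrows $y\to\lambda(y)$ form a torsor for the larger group $G_y$. Writing $\lambda_y$ for a limit of interior arrows, only such a limit is guaranteed to satisfy $\lambda_yh=\iota(h)\lambda_y$ for all $h\in G_\sigma$. Another choice $\lambda_yk$ with $k\in G_y\setminus G_\sigma$ has two defects:
\begin{itemize}
\item It conjugates $G_\sigma$ onto $c\,\iota(G_\sigma)c^{-1}$, where $c=\lambda_yk\lambda_y^{-1}\in Q_{\lambda(y)}$; this may be a different subgroup.
\item It gives the point $[q\lambda_yk]\neq[q\lambda_y]$, which is not the limit of the interior points.
\end{itemize}
With such a choice your map may be ill defined on $Q\times_{G_\sigma}\widetilde\sigma$, it is discontinuous, and your injectivity step $q^{-1}q'\in\iota(G_\sigma)$ can fail. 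Note that two pairs giving the \emph{same} arrow trivially have $q=q'$; the relevant condition is agreement up to precomposition by $G_\sigma$, which is exactly this conjugation. You never show that the interior arrows converge to a single such limit, and your last paragraph leaves the boundary open.

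You could supply the step by hand. Near a boundary point $y$, properness and the \'etale local model show that the arrows from a small neighbourhood of $y$ to one of $\lambda(y)$ form $|G_y|$ disjoint local bisections. Small neighbourhoods of $y$ meet the relative interior of $\widetilde\sigma$ in connected sets, so an interior family lies in one bisection and extends.

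The paper bypasses all of this. It pulls the principal $Q$-bundle $U\to[U/Q]$ back along $\sigma\to\sigma\times BG_\sigma\to\X$ to a principal $Q$-bundle on the \emph{closed} simplex, which is trivial because $\sigma$ is contractible. A section is exactly a continuous pair $(\lambda(y),\lambda_y)$ on all of $\sigma$, boundary included. The commuting $G_\sigma$-action becomes a continuous map to the discrete set $\operatorname{Hom}(G_\sigma,Q)$, hence a constant $\theta$, which is injective by representability. That one sheet is a homeomorphic lift of $\sigma$ is then an output of the argument (compact domain, Hausdorff target) rather than an input from \cite{MP}. Translates meeting on the boundary never matter, because the claim concerns the pullback, not its image in $U$.
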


\begin{proof}
Pull the principal $Q$-bundle $U\to[U/Q]$ back first to
$\sigma\times BG_\sigma$ and then to $\sigma$. It is trivial because
$\sigma$ is contractible. The commuting $G_\sigma$-action is therefore a
continuous map $\sigma\to\operatorname{Hom}(G_\sigma,Q)$, hence constant;
representability makes its value injective. Writing the constant homomorphism as $\theta$, the quotient is
$Q\times_{G_\sigma}\sigma$, with
$(q,z)\sim(q\theta(h)^{-1},hz)$. One sheet maps bijectively to a lift of
$\sigma$; compactness of the domain and the Hausdorff property make this a
homeomorphism. Its interior is one lifted interior component, and the other
sheets are its $Q$-translates. Changing the trivialization only conjugates
$\theta$.
\end{proof}

\subsection{Equivariant clutching and descent}

\begin{lemma}[Equivariant clutching]\label{lem:clutching}
Let a finite group $Q$ act cellularly without inversions and attach a
possibly locally finite family of cells by
\[
 Z=A\cup_{\coprod_e Q/H_e\times S^{j-1}}
       \coprod_e Q/H_e\times D^j.
\]
Restriction is an equivalence between $Q$-bundles on $Z$ and a $Q$-bundle on
$A$, $H_e$-bundles on the disks, and specified equivariant boundary
isomorphisms. The same holds for bundle maps and commutes with direct sums,
tensor products, and exterior powers.
\end{lemma}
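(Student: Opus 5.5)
We prove the lemma by reducing equivariant clutching to ordinary gluing of vector bundles along a closed cover. Along the way we use induction of bundles from $H_e$ to $Q$ on the cells.

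\emph{Reduction on each cell.} A $Q$-bundle on $Q/H_e\times D^j$ is the same as an $H_e$-bundle on $D^j$. Restriction to the identity sheet $eH_e\times D^j$ gives one direction, and $Q\times_{H_e}(-)$ gives the inverse. The same correspondence holds on $Q/H_e\times S^{j-1}$, and $Q$-equivariant isomorphisms over the boundary correspond to $H_e$-equivariant isomorphisms over $S^{j-1}$. This correspondence commutes with direct sums, tensor products and exterior powers, because induction from an index-$[Q:H_e]$ subgroup on a disjoint union of sheets is just transport of the fibrewise structure. So the data listed in the lemma are equivalent to a $Q$-bundle $E_A$ on $A$, a $Q$-bundle $E_e$ on each $Q/H_e\times D^j$, and $Q$-isomorphisms $\alpha_e:E_A|_{Q/H_e\times S^{j-1}}\to E_e|_{\partial}$. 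Here we use the attaching map $f_e$ to pull $E_A$ back.

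\emph{Gluing.} Given such data, we form the adjunction space
\[
 E=E_A\cup_{\coprod_e\alpha_e}\coprod_eE_e .
\]
The action of $Q$ is defined piecewise and is well defined because each $\alpha_e$ is equivariant, so $E\to Z$ is $Q$-equivariant. The linear structure is fibrewise and is inherited from the pieces.

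\emph{Local triviality.} This is the main obstacle, since $Z$ is a pushout rather than an open union. We would argue as follows.
\begin{itemize}[label=--]
\item At interior points of cells, local triviality is clear.
\item At a point of $A$, first take a collar neighbourhood $S^{j-1}\times[1/2,1]$ in each disk. By contractibility of the collar in the radial direction, $E_e$ there is isomorphic to the pullback of $E_e|_{S^{j-1}}$. Indeed, a bundle on $S^{j-1}\times I$ is pulled back from $S^{j-1}$ by homotopy invariance over compact bases; we apply this cell by cell.
\item Using $\alpha_e$, extend a local trivialization of $E_A$ on an open set $V\subset A$ radially over $V\cup\bigcup_e f_e^{-1}(V)\times(1/2,1]$, which is open in $Z$.
\end{itemize}
Local finiteness of the family of cells is what makes this union open and the adjunction topology coincide with the colimit topology. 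Hence $E$ is a locally trivial bundle. No equivariant trivialization is needed: a $Q$-bundle is just a bundle with a fibrewise-linear continuous $Q$-action. Continuity of the action holds because $Z$ carries the quotient topology and each piece is continuous. Here we use that $E_A\sqcup\coprod E_e\to E$ is a quotient map, which holds because the pushout is along a closed cofibration and products with the compact group $Q$ preserve quotients.

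\emph{Equivalence.} For essential surjectivity, restriction of the glued $E$ returns the data up to the canonical isomorphisms. For full faithfulness, a compatible family of bundle maps glues to a continuous map on $E$ by the universal property of the pushout, and a map on $Z$ is determined by its restrictions to $A$ and the disks. Conversely, any $Q$-bundle $F$ on $Z$ is isomorphic to the bundle glued from its restrictions, via the tautological identifications. Compatibility with direct sums, tensor products and exterior powers follows because these are fibrewise functors. Each of them takes the gluing maps $\alpha_e$ to the corresponding $\alpha_e\oplus\alpha'_e$, $\alpha_e\otimes\alpha'_e$, $\Lambda^k\alpha_e$, and the canonical comparison maps are continuous since they are continuous on each closed piece.
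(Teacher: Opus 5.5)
Your proposal is correct and takes essentially the paper's route: induce the disk bundles from $H_e$ to $Q$, glue along the boundary isomorphisms, and extend local trivializations across the attaching locus with a collar and the radial retraction. You then glue compatible maps through the quotient topology and identify any bundle on $Z$ with the one reconstructed from its restrictions. Two small corrections: your neighbourhood $V\cup\bigcup_e f_e^{-1}(V)\times(1/2,1]$ is open in the pushout topology without any local finiteness (local finiteness matters for matching that topology with an ambient one and for closed-set pasting), and the quotient-product fact you actually need concerns $\C^r$, not $Q$: continuity of the inverse trivialization requires $W\times\C^r$ to carry the quotient topology of the pieces times $\C^r$.
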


\begin{proof}
Induce every disk bundle from $H_e$ to $Q$ and identify its boundary with
the restriction of the bundle on $A$. A collar of $S^{j-1}$ in $D^j$ and
radial retraction extend the prescribed boundary identification and provide
equivariant bundle charts across the attaching locus. Local finiteness
reduces the construction near each point to finitely many cells. Compatible bundle maps descend through the same quotient topology. The
construction also works after restriction to an open subset, since locally
finite closed-set pasting applies to maps. Restriction recovers the data, and
the quotient map from the reconstructed bundle to the original one is an
isomorphism in the collar charts and off the attaching locus. Thus restriction and clutching are
quasi-inverse and preserve the stated operations.
\end{proof}

Let $\mathcal G_j$ be the category in \eqref{eq:gluing}. Restriction gives
$\mathsf{Res}_j:\Vect(\X^j)\to\mathcal G_j$. To construct its quasi-inverse,
cover $|\X|$ by open stars whose closed stars lie in original chart
footprints. In a chart $[U/Q]$, the closed-star preimage is a finite
$Q$-simplicial complex; Lemmas~\ref{lem:chart} and~\ref{lem:clutching} turn
an object of $\mathcal G_j$ into a $Q$-bundle on its open-star preimage.

Restriction is fully faithful there: the old skeleton and induced closed
cells form a locally finite closed cover, so compatible continuous maps glue
uniquely by closed-set pasting, and equivariance is checked on the same
pieces. The assertion remains valid after passage to an open subset.

Cover an overlap of two such open substacks by common chart refinements,
obtained by shrinking orbifold charts inside the overlap. Their maps to the
two original charts are taken over $\X$. Pull back both constructed bundles
together with their identifications with the prescribed data.
Lemma~\ref{lem:chart}, restricted to the pieces in each refinement, identifies
the characteristic-cell pullbacks. Since clutching commutes with restriction,
both bundles realize the same old-skeleton and cell data there. Full
faithfulness gives the unique comparison isomorphism inducing these
identifications. On further refinements the comparisons agree by uniqueness,
so they descend to an isomorphism on the overlap. On a triple overlap, the
composite of two comparisons and the direct comparison induce the same maps
on the prescribed data; full faithfulness makes them equal.

Open descent defines
$\mathsf{Glue}_j:\mathcal G_j\to\Vect(\X^j)$, on objects and morphisms. The
retained identifications give
$\mathsf{Res}_j\mathsf{Glue}_j\cong\id$, while the piecewise canonical map
for a bundle $F$ gives
$\mathsf{Glue}_j\mathsf{Res}_j(F)\cong F$. Hence the functors are
quasi-inverse.

The construction takes place over the original stack. In a presentation,
each arrow acts by the unique map glued from its actions on the cell pieces, so
$\rho(g_2)\rho(g_1)=\rho(g_2g_1)$. Arrows inducing the same germ remain
distinct. Both functors commute with the fibrewise operations and preserve
ineffective isotropy and its monodromy. This proves
Theorem~\ref{thm:cells}.\hfill$\square$

\begin{remark}[Metrics on the skeleta]\label{rem:metrics}
Averaging over finite chart groups and using a locally finite partition of
unity gives every continuous bundle an invariant Hermitian metric. Thus every
equivariant bundle monomorphism splits orthogonally.
\end{remark}

\section{Adams operations and regular bundles}\label{sec:adams}

For a skeleton $Z$, write $K^0_{\mathrm{vb}}(Z)$ for the Grothendieck group,
under direct sum, of continuous complex bundles of bounded finite rank on
$Z$. We use finite bundle expressions and their pullbacks, without an
excision or exactness assumption on these groups.

\begin{lemma}[Relative embeddings]\label{lem:embedding}
Let $Z$ be one of the skeleta above and $Y\subset Z$ a subcomplex. Let
$A,B$ be bundles on $Z$ with a specified monomorphism $B|_Y\to A|_Y$. On each cell outside $Y$, let
$a_\rho,b_\rho$ be the multiplicities of an irreducible $\rho$ in the
pullbacks of $A,B$. If $a_\rho\geq b_\rho$ and, on a $j$-cell,
\begin{equation}\label{eq:gapcondition}
 2(a_\rho-b_\rho)\geq j-1
 \quad\hbox{whenever }b_\rho>0.
\end{equation}
then the monomorphism extends over $Z$.
\end{lemma}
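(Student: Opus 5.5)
We extend the monomorphism one skeleton at a time, and one cell at a time within each skeleton, using Theorem~\ref{thm:cells}. Put $Z^j=Y\cup(Z\cap\X^j)$. Suppose a monomorphism $B|_{Z^{j-1}}\to A|_{Z^{j-1}}$ has already been built. By Theorem~\ref{thm:cells}(ii), extending it over $Z^j$ means choosing, on each $j$-cell $\sigma\times BG_\sigma$ not in $Y$, a $G_\sigma$-equivariant monomorphism $\phi_\sigma^*B\to\phi_\sigma^*A$ over $\sigma$. This map must agree on $\partial\sigma$ with the pullback of the map already defined on $Z^{j-1}$. Compatible bundle maps glue, and a map that is injective on every cell is injective globally. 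Local finiteness means the cells can be treated independently, so the countable number of cells causes no difficulty. At the end, $Z$ is the union of the $Z^j$ for $j\leq n$.

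For a single cell, $G_\sigma$ acts trivially on $\sigma$. By contractibility, the pulled-back bundles are trivial $G_\sigma$-bundles $\sigma\times V_A$ and $\sigma\times V_B$. Decompose them into isotypic components, $V_A=\bigoplus_\rho \rho\otimes\C^{a_\rho}$ and similarly for $V_B$. By Schur's lemma, an equivariant monomorphism over a point is the same as a family of injective linear maps $\C^{b_\rho}\to\C^{a_\rho}$, one for each $\rho$. This needs $a_\rho\geq b_\rho$, which is one of the hypotheses. A Hermitian metric from Remark~\ref{rem:metrics} lets us deform monomorphisms to isometric ones. The space of equivariant monomorphisms is then homotopy equivalent to $\prod_{\rho:\,b_\rho>0}V_{b_\rho}(\C^{a_\rho})$, a product of complex Stiefel manifolds.

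The boundary data is therefore a map $\partial\sigma\cong S^{j-1}\to\prod_\rho V_{b_\rho}(\C^{a_\rho})$. The map extends over $\sigma\cong D^j$ exactly when it is null-homotopic. The Stiefel manifold $V_b(\C^a)$ is $2(a-b)$-connected, since its first nontrivial homotopy group is $\pi_{2(a-b)+1}$. By \eqref{eq:gapcondition}, $2(a_\rho-b_\rho)\geq j-1$, so $\pi_{j-1}$ of every factor vanishes and an extension exists. Factors with $b_\rho=0$ are points and impose no condition.

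Using the Gram--Schmidt retraction and a collar, we then return to honest (not only homotopy-level) monomorphisms that agree exactly with the given ones on $\partial\sigma$. For $j=0$ the boundary is empty, and we only need $a_\rho\geq b_\rho$.

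The main obstacle is the identification in the second step. Because of Remark~\ref{rem:boundary}, the boundary data on $\partial\sigma$ is the pullback along $\phi_\sigma$, not a restriction to a constant-isotropy substack. We have to check that this pullback is a $G_\sigma$-equivariant map between the trivialized bundles over $\partial\sigma$. This holds because $\phi_\sigma$ is representable and the clutching of Lemma~\ref{lem:clutching} is set up this way. Once that is confirmed, the Stiefel-connectivity count is routine.
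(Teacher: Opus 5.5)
Your proposal is correct and follows essentially the same route as the paper. Both arguments use isotypic reduction on each constant-group disk to a product of Stiefel manifolds $V_{b_\rho}(\C^{a_\rho})$, which are $2(a_\rho-b_\rho)$-connected, so the boundary map on $S^{j-1}$ extends under \eqref{eq:gapcondition}, and both glue the cellwise extensions with Theorem~\ref{thm:cells} and local finiteness. The one cosmetic difference is that the paper extends directly in the space of all injective equivariant maps, which retracts onto the Stiefel product, so the prescribed boundary value stays fixed without your Gram--Schmidt-and-collar step.
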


\begin{proof}
On a constant-group disk, isotypic decomposition reduces the problem to
trivial multiplicity bundles. The space of injective equivariant maps
retracts onto a product of $V_{b_\rho}(\C^{a_\rho})$. This Stiefel
manifold is $2(a_\rho-b_\rho)$-connected: forgetting the last frame vector
has a sphere as fibre, and induction with the homotopy exact sequence gives
this connectivity range. Thus the boundary map extends by
\eqref{eq:gapcondition}; at vertices nonemptiness suffices. Using all
injective maps keeps the prescribed boundary value fixed.
Theorem~\ref{thm:cells} glues the extensions, and local finiteness gives
continuity.
\end{proof}

\begin{theorem}[Continuous regular bundles]\label{thm:regular}
If $|G_x|\leq B$ for every $x$, then $\X$ has a continuous complex bundle
$E$ of rank $R=R(n,B)$ with
\[
 E_x\cong\C[G_x]^{\oplus R/|G_x|}
 \qquad(x\in\X).
\]
\end{theorem}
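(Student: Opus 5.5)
Throughout, $L=\operatorname{lcm}(1,\ldots,B)$ and $m=1+L$. For $B=1$ the trivial line bundle works, so I assume $B\geq2$. The plan is to build $E$ skeleton by skeleton over the triangulation $T$ of Theorem~\ref{thm:cells}. Put $R_j=L\prod_{s=1}^{t(j)}(m^s-1)$, so that $R_n=R(n,B)$. The inductive claim is that $\X^j$ carries a bundle $E_j$ of rank $R_j$ whose fibre at every $x$ is $\C[G_x]^{\oplus R_j/|G_x|}$; this multiplicity is an integer because $|G_x|$ divides $L$.

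\emph{Base case.} On $\X^0$, give each vertex the $G_\sigma$-bundle $\C[G_\sigma]^{\oplus L/|G_\sigma|}$. No boundary data occur, so Theorem~\ref{thm:cells}(ii) glues these.

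\emph{Even steps.} Suppose $j=2s$ is even and $E_{j-1}$ is given. For a $j$-cell $\sigma$, pull $E_{j-1}$ back along $\phi_\sigma|_{\partial\sigma}$ to get a $G_\sigma$-bundle on $S^{j-1}$ with trivial $G_\sigma$-action on the base. Decompose it isotypically. For each irreducible $\rho$, the multiplicity bundle has rank $a_\rho=R_{j-1}\dim\rho/|G_\sigma|$. Since restriction of regular representations along $G_\sigma\hookrightarrow G_y$ is regular, the fibres at boundary points are regular $G_\sigma$-representations, which is what makes this multiplicity constant. Because $\widetilde K^0(S^{2s-1})=0$, the multiplicity bundle is stably trivial. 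I then compare it with the trivial bundle of the same rank via Lemma~\ref{lem:embedding}, using the trivial bundle as $B$ and the pullback as $A$ on the disk $D^j$; the gap condition is vacuous when the ranks agree. More directly, I use that $a_\rho\geq L\,\cdot$ (the product) is large. Stable triviality plus rank $\geq s$ gives triviality on $S^{2s-1}$, so the bundle extends over the disk as $\C[G_\sigma]^{\oplus R_{j-1}/|G_\sigma|}$. Theorem~\ref{thm:cells} glues, and $R_j=R_{j-1}$.

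\emph{Odd steps $j=2s+1$ with $s\geq1$.} This is the main step. The boundary restrictions $E_{j-1}|_{\partial\sigma}$ on $S^{2s}$ have reduced classes in $\bigoplus_\rho\widetilde K^0(S^{2s})\otimes\operatorname{End}(\rho)$ that need not vanish. I apply the virtual operation $\Phi=m^s\id-\psi^m$ to the whole bundle $E_{j-1}$ on $\X^{j-1}$, expressing $\psi^m$ through exterior powers by Newton's identities. This writes $\Phi[E_{j-1}]=[A]-[D]$ for actual bundles $A,D$ on $\X^{j-1}$, built from $\Lambda^k E_{j-1}$; by Theorem~\ref{thm:cells}(iii), these operations commute with restriction to cells and with stabilizer actions. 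Characters of $\psi^m V$ are $\chi_V(g^m)=\chi_V(g)$ because $m\equiv1$ modulo every element order. Hence on each fibre $\Phi$ multiplies $\C[G_x]^{\oplus k}$ by $m^s-1$, giving rank $R_{j-1}(m^s-1)=R_j$ when computed in $K$ of the fibre. On $\widetilde K^0(S^{2s})$ the operation $\psi^m$ acts by $m^s$, so $\Phi$ kills the reduced boundary class on each $\partial\sigma$, where the base action is trivial and $K_{G_\sigma}=R(G_\sigma)\otimes K$.

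Next I realize $[A]-[D]$ by an actual bundle. The fibrewise multiplicities of $A$ exceed those of $D$ by $(m^s-1)R_{j-1}\dim\rho/|G_x|$. I apply Lemma~\ref{lem:embedding} with $Y=\varnothing$ to get a monomorphism $D\hookrightarrow A$ over $\X^{j-1}$, inductively over skeleta up to dimension $2s$; the gap condition $2(a_\rho-b_\rho)\geq 2s-1$ holds because the difference is at least $m^s-1\geq s$. Remark~\ref{rem:metrics} then gives the orthogonal complement $E'=A\ominus D$, which is a regular bundle of rank $R_j$.

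Finally, on each $(2s+1)$-cell the boundary multiplicity bundles of $E'$ are stably trivial on $S^{2s}$ and have rank at least $m^s-1\geq s$, which lies in the stable range. They are therefore trivial and extend over $D^{2s+1}$ as constant regular representations, and gluing by Theorem~\ref{thm:cells}(ii) yields $E_j$.

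\emph{Main obstacle.} The step I expect to be hardest is verifying that $\Phi$ kills the boundary class equivariantly. This requires identifying the $G_\sigma$-equivariant $K$-theory of the boundary sphere with trivial action as $R(G_\sigma)\otimes K(S^{2s})$, and checking that $\psi^m$ acts as the identity on the $R(G_\sigma)$ factor and by $m^s$ on $\beta_s$ after isotypic decomposition. The subtle point is that $\psi^m$ permutes irreducibles only through Galois action, which is trivial here.
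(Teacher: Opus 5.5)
Your proposal is correct and follows the paper's proof step for step: the operation $m^s\id-\psi^m$ with $m=1+L$, Newton's identities giving $[A]-[D]$, Lemma~\ref{lem:embedding} for $D\hookrightarrow A$, the orthogonal complement, and stable-range triviality of the boundary multiplicity bundles, with the vanishing of $\widetilde K^0$ of odd spheres handling the even steps. The slips are minor. You never construct the bundle on the $1$-skeleton, which is trivial since both endpoint fibres are $\C[G_\sigma]^{\oplus L/|G_\sigma|}$. Your aside that the gap condition of Lemma~\ref{lem:embedding} is vacuous for equal ranks is false for $j\geq2$, though your stable-range argument replaces it anyway. Finally, the bound should read $a_\rho\geq\prod_{s'=1}^{s-1}(m^{s'}-1)$ rather than $L$ times this product, and this corrected bound still gives the needed $a_\rho\geq s$.
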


\begin{proof}
If $B=1$, the trivial complex line has the required fibres. Assume $B\geq2$.
For every injection $J\to H$,
\begin{equation}\label{eq:regularrestriction}
 \Res^H_J\C[H]\cong\C[J]^{\oplus[H:J]},
\end{equation}
and this remains true after changing the injection by an automorphism. At a
vertex take $\C[G_v]^{\oplus L/|G_v|}$. Endpoint restrictions on an edge
agree, and on the boundary of a two-cell every isotypic multiplicity bundle
on $S^1$ is trivial. Hence the construction extends through dimension two
with rank $L$, using the actual characteristic boundary maps. This proves
the assertion when $n\leq2$.

Put $m=1+L$ and, for $s\geq1$, set
\begin{equation}\label{eq:adams}
 c_s=m^s-1,\qquad T_s=m^s\id-\psi^m.
\end{equation}
Every stabilizer exponent divides $L$. The character identity
$\chi_{\psi^m V}(g)=\chi_V(g^m)$, with $g^m=g$, shows that $\psi^m$ fixes
each stabilizer representation ring. On a characteristic sphere with group $J$,
isotypic decomposition gives
\[
 K^0_{\mathrm{vb}}(S^q\times BJ)\cong K^0(S^q)\otimes R(J),
\]
where $R(J)$ is the ordinary representation ring. The reduced ordinary
factor is $\mathbb Z\beta_s$ for $q=2s>0$ and vanishes for odd $q$.
The integral identity $\psi^m\beta_s=m^s\beta_s$ follows for $s=1$ from
$\beta_1=[L_1]-1$ and $\beta_1^2=0$; external products and Bott periodicity
give the general case \cite[Proposition~2.21]{Hatcher}.

Write $r_0=L$ and $r_s=c_sr_{s-1}$. Suppose $1\leq s\leq t(n)$ and a
regular bundle $E$ of rank $r_{s-1}$ has been constructed on $\X^{2s}$.
Its pullback $F$ to the boundary of a $(2s+1)$-cell with group $J$ has class
\[
 [F]=[W]+\beta_s a,\qquad
 W=\C[J]^{\oplus r_{s-1}/|J|},\quad a\in R(J).
\]
Indeed, at a boundary point with stabilizer $H$, the actual characteristic
map induces an injection $J\to H$, and \eqref{eq:regularrestriction} gives
exactly $r_{s-1}/|J|$ regular copies. Thus
\begin{equation}\label{eq:model-cancellation}
 T_s[F]=c_s[W]+(m^s-m^s)\beta_s a=c_s[W].
\end{equation}

The Newton polynomial for $\psi^m$ is integral in the exterior powers.
Evaluate it on $E$ and separate the positive and negative monomials in
$T_s[E]$ to obtain actual bundles $A,D$ with
\[
 T_s[E]=[A]-[D].
\]
These are fixed finite sums of tensor products of exterior powers of $E$,
so their ranks are finite and constant. The construction commutes with
pullback and with all descent maps. Only these finite expressions are
needed on the skeleton; the Adams calculation takes place on the
characteristic spheres and in the stabilizer representation rings.

For an irreducible $\rho$ of a cell group $G$, let $a_\rho,d_\rho$ be its
multiplicities in $A,D$. Since $|G|\mid L\mid r_{s-1}$,
\begin{equation}\label{eq:gaps}
 a_\rho-d_\rho
   =c_s\frac{r_{s-1}}{|G|}\dim\rho
   \geq c_s\geq sL\geq2s.
\end{equation}
On a $j$-cell of $\X^{2s}$, this gives
$2(a_\rho-d_\rho)\geq4s\geq j-1$. Lemma~\ref{lem:embedding} supplies
$D\hookrightarrow A$, which splits by Remark~\ref{rem:metrics}.
The quotient $E^\flat=A/D$ is an actual bundle of rank $r_s$, with
\[
 [E^\flat]=T_s[E],\qquad E^\flat_x\cong E_x^{\oplus c_s}.
\]
The fibre isomorphism follows from equality of irreducible multiplicities.

By \eqref{eq:model-cancellation}, each boundary multiplicity bundle of
$E^\flat$ is stably trivial. Here the auxiliary bundle in the ordinary
group-completion relation has a complement in a finite trivial bundle on
the compact sphere. The multiplicity ranks are
\[
 u_\rho=\frac{r_s}{|J|}\dim\rho\geq c_s\geq2s.
\]
A stably trivial rank-$u$ complex bundle on $S^q$, $q\geq2$, is trivial
when $2u>q$:
the fibrations $U(u)\to U(u+1)\to S^{2u+1}$ make stabilization injective on
$\pi_{q-1}U(u)$ in this range. Since $2u_\rho\geq4s>2s$, the boundary
multiplicity bundles are trivial. Choose equivariant boundary
trivializations and attach the constant regular bundles over the
$(2s+1)$-disks. Theorem~\ref{thm:cells} glues them to $E^\flat$, giving a
regular bundle of rank $r_s$ on $\X^{2s+1}$.

For the next attachment, pull this bundle back to $S^{2s+1}\times BJ$.
Its multiplicity ranks again satisfy $u_\rho=(r_s/|J|)\dim\rho\geq2s$,
and $\widetilde K^0(S^{2s+1})=0$ makes the multiplicity bundles stably
trivial. Since
\[
 2u_\rho\geq4s>2s+1\qquad(s\geq1),
\]
they are trivial. Attach the constant regular bundles over the
$(2s+2)$-disks, with no change in rank.

Continue for $s=1,\ldots,t(n)$, omitting the last even-dimensional
attachment if $n$ is odd. The resulting rank is
$r_{t(n)}=L\prod_{s=1}^{t(n)}c_s=R(n,B)$. The same polynomial operations
apply through empty levels and on components of smaller dimension, so all
components have this rank. At each level, local finiteness supplies
gluing without a multiplier depending on the number of cells.
\end{proof}

\begin{remark}
We do not expect the bound \eqref{eq:rank} to be optimal. It is uniform
over any number of cells and components with the same $n$ and $B$. For example,
$R(3,2)=R(4,2)=4$ and $R(5,2)=R(6,2)=32$.
\end{remark}

\section{Representation-preserving smoothing}\label{sec:smoothing}

We use the local equivariant trivialization and patching argument in
\cite[proof of Corollary~1.3]{Pardon}, with boundary control on the overlap
approximation. No common bound on dimension or stabilizer order is needed.

\begin{samepage}
\begin{proposition}\label{prop:smoothing}
Let $E^0$ be a continuous complex bundle of constant finite rank $r$ on a
Hausdorff second-countable smooth orbifold $\X$. There are an invariant
continuous metric, a smooth Hermitian bundle $E^\infty$, and a continuous
equivariant unitary isomorphism
\[
 \Phi:E^0\longrightarrow E^\infty.
\]
Thus every full-stabilizer representation is preserved.
\end{proposition}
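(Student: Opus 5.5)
The plan is to follow Pardon's local-trivialization-and-patching scheme, with the approximation done equivariantly so that the fibre representations never change. First, Remark~\ref{rem:metrics} gives $E^0$ an invariant continuous Hermitian metric $h$; we fix it once and for all. Next we choose a locally finite countable cover of $|\X|$ by uniformizers $[U_i/Q_i]$ with $U_i$ contractible (for instance linear balls around a point fixed by $Q_i$), together with shrinkings $V_i\Subset V_i'\Subset U_i$ whose footprints still cover. On $U_i$ the pullback of $E^0$ is a $Q_i$-bundle over a $Q_i$-equivariantly contractible space. It is therefore equivariantly isomorphic to $U_i\times P_i$, where $P_i$ is the fibre representation at the fixed point; by the equivariant homotopy invariance of bundles for finite groups, the same $Q_i$-representation appears at the centre. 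Averaging and Gram--Schmidt make this trivialization unitary with respect to $h$. This yields a unitary $Q_i$-equivariant frame $\tau_i:U_i\times P_i\to E^0|_{U_i}$.

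The transition data are continuous maps $g_{ij}=\tau_i^{-1}\tau_j$ on overlaps, taking values in unitary isomorphisms $P_j\to P_i$. More precisely, they live on the space of arrows between the two charts and are equivariant for the $Q_i\times Q_j$-actions; ineffective arrows are included, since the groupoid retains all arrows. The smooth bundle $E^\infty$ will be glued from the same local models $U_i\times P_i$ with smooth cocycle $g_{ij}^\infty$. We construct the $g^\infty$ inductively in $i$. We first handle the overlaps of the new chart with earlier charts, working on the arrow manifold over $V_i'$. Here we smooth the continuous map $\tau_{<i}^{-1}\tau_i$, which is defined using the already modified frames on the earlier charts, by convolution or Whitney approximation. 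We then average over the finite group $Q_i\times Q_j$, which keeps equivariance, and project to the unitary group by polar decomposition, which preserves smoothness, equivariance and closeness. The boundary control is as follows. On the part of the overlap where earlier adjustments already fix the value, the map is already smooth, and we keep it fixed by a relative approximation with an invariant cutoff. The approximation is uniformly close on compacta, with tolerance $\varepsilon_i$ chosen summable.

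Equivalently, and more cleanly, we redefine the frame on the new chart. Set $\tau_i'=\tau_i\cdot u_i$, where $u_i:U_i\to U(P_i)^{Q_i}$ is continuous, close to the identity, and chosen so that $\tau_j'^{-1}\tau_i'$ is smooth near $\overline{V_i}$ for all $j<i$. Since $U(P_i)^{Q_i}$ is a product of unitary groups, it is a Lie group with an exponential chart near $1$. Small continuous perturbations can therefore be written as $\exp$ of an equivariant Hermitian-endomorphism field. Smoothing such a field relative to a closed set where it is already smooth is the standard relative Whitney approximation, made equivariant by averaging. Local finiteness makes each point affected by finitely many steps, so the limit frames $\tau_i'$ exist and are continuous. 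Their transitions are smooth on the shrunken cover, and these transitions form a smooth unitary cocycle defining $E^\infty$ with its Hermitian metric. The map $\Phi$ is then defined chartwise as $\tau_i'^{-1}$. It is well defined because the cocycle is exactly the transition of the modified frames, and it is continuous, unitary and equivariant. Each fibre map is a $G_x$-equivariant unitary isomorphism, so all stabilizer representations are preserved.

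The main obstacle is the relative, equivariant step. The new frame on chart $i$ must be compatible with finitely many already-fixed earlier charts on arrow spaces, and these spaces carry nontrivial actions by the ineffective kernels. I would first try to work entirely on the object space $U_i$, using invariant cutoffs. The compatibility with arrows is then automatic: the transition along an arrow $g$ is $\tau_j'^{-1}\circ g\circ\tau_i'$, and $g$ acts smoothly on the smooth models. Smoothness therefore only needs to be checked chartwise against chart embeddings, and these reduce to the equivariant smoothing of the $U(P_i)^{Q_i}$-valued correction.
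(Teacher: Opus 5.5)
Your architecture matches the paper's. You use equivariant local trivializations and equivariant smooth approximation, then correct each new chart inductively by a continuous equivariant gauge close to the identity, so that it matches the structure already built. Keeping all frames $h$-unitary, so that $h$ itself becomes the smooth metric, is a harmless variant of the paper's choice of metric at the end.

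The correction step, however, is wrong as written. You take $u_i$ with values in $U(P_i)^{Q_i}$, the commutant of $\rho_i(Q_i)$. For $\tau_i u_i$ to remain $Q_i$-equivariant you need $u_i(qx)=\rho_i(q)u_i(x)\rho_i(q)^{-1}$, so commutant values force $u_i$ to be $Q_i$-invariant. But the real gauge freedom at $x$ is the commutant of $\rho_i(Q_{i,x})$ only, where $Q_{i,x}$ is the stabilizer of $x$, and this is much larger at points of small isotropy. Commutant-valued corrections cannot absorb a general non-smooth gauge. Take $Q_i=\mathbb Z/2$ reflecting an interval and $P_i=\C[\mathbb Z/2]$; the commutant is the diagonal torus. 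Suppose $\tau_i$ differs from an old smooth frame by rotation through an odd angle $\theta(x)$ that is nowhere differentiable on a subinterval where $0<\theta<\pi/4$. Right multiplication by diagonal unitaries leaves the modulus $|\sin\theta|$ of the off-diagonal entry unchanged, so no such correction is smooth. The correct group is that of continuous maps $u_i:U_i\to U(P_i)$ equivariant for conjugation, written near $1$ as $\exp$ of an equivariant skew-Hermitian field (not Hermitian). Likewise, make $\tau_i$ unitary by polar decomposition, since Gram--Schmidt is not equivariant.

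The relative step, which you rightly call the crux, is also not pinned down. The correction $u_i=\tau_i^{-1}k$, with $k$ an old-smooth equivariant frame near $\tau_i$, is defined only on the open part of $U_i$ over the region where earlier frames already give a smooth structure. It must extend continuously over all of $U_i$. Closeness on compacta with summable tolerances does not give this; no limit occurs anyway, since each $\tau_i'$ is defined once. Nor is anything on $U_i$ ``already smooth'' in advance.

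The paper bounds the approximation error by an invariant function $\varepsilon$ that is positive on the overlap and vanishes on $|B|\setminus|A|$. The correction then tends to the identity at the frontier and extends by the identity. This gives compatibility on the whole overlap, leaves the old structure exactly unchanged, and makes the induction stationary with no shrinking. A cutoff can also work, but then compatibility holds only where the cutoff equals $1$. You would then need neighbourhoods of the $\overline{V_j}$ whose pairwise intersections lie where earlier frames already agree, and your proposal leaves this bookkeeping implicit. With these two repairs, your argument becomes the paper's proof written in terms of frames.
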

\end{samepage}

\begin{proof}
The zero bundle is immediate. The coarse space and its open subsets are
metrizable and paracompact. Averaging chartwise bump functions and
normalizing a locally finite sum gives smooth orbifold partitions of unity.
Averaging local metrics and patching gives invariant smooth metrics on
bundles that are already smooth. Open sets below are open substacks;
equivariance refers to the full groupoid action.

Choose a chart $[U/G]$ centred at a $G$-fixed point $u$, and put $V=E^0_u$
with its full representation $\rho$. After shrinking, choose a continuous
trivialization $t_x:V\to E^0_x$ with $t_u=\id$. Write
$\alpha_g(y):E^0_y\to E^0_{gy}$ for the original action and set
\[
 T_x=\frac1{|G|}\sum_{g\in G}
       \alpha_g(g^{-1}x)t_{g^{-1}x}\rho(g)^{-1}.
\]
Then $T_u=\id$ and
$T_{\ell x}=\alpha_\ell(x)T_x\rho(\ell)^{-1}$.
Thus $T$ is an equivariant trivialization after an invariant shrinking.
Transporting the product smooth structure on $U\times V$ gives a local
smooth structure on $E^0$, with the original action of the full group $G$.

For smooth orbifold bundles $F,H$ with invariant smooth metrics, every
continuous equivariant map $h:F\to H$
admits a smooth equivariant approximation $k$ with
$\|k_x-h_x\|<\eta(x)$, for any positive continuous function $\eta$ on the
coarse space. Indeed, in a centred chart choose a smooth local section of
$\operatorname{Hom}(F,H)$ with value $h_u$ at the centre, and average over
the full chart group. Since $h_u$ intertwines the stabilizer actions,
averaging fixes this value. An invariant shrinking makes the error less
than $\eta/2$. A locally finite smooth partition of unity combines these
local sections with the same error bound. Each partition support is closed
in the ambient open suborbifold and contained in its section domain, so
each term extends smoothly by zero.

Suppose $E^0$ has smooth structures $E_A,E_B$ over open sets $A,B$. Put
$W=A\cap B$; the disjoint case is immediate. Let
$h:E_A|_W\to E_B|_W$ be the identity of the underlying continuous bundle.
Choose invariant smooth metrics on $E_A,E_B$ and a continuous function
$\varepsilon:|B|\to[0,1/4]$ positive on $|W|$ and zero on
$|B|\setminus|A|$. For a compatible metric on $|B|$, take the distance to this complement,
truncated at $1/4$; for an empty complement take $\varepsilon=1/4$.
Approximate $h$ on $W$ so that
\begin{equation}\label{eq:smoothing-relative}
 \begin{aligned}
 \|k_x-h_x\|&<\frac{\varepsilon(x)}
                   {2\max\{1,\|h_x^{-1}\|\}},\\
 \|k_xh_x^{-1}-\id\|&\leq\|k_x-h_x\|\,\|h_x^{-1}\|
             <\frac{\varepsilon(x)}2\leq\frac18.
 \end{aligned}
\end{equation}
Thus $k$ is a smooth isomorphism. Define
\begin{equation}\label{eq:smoothing-correction}
 a_x=\begin{cases}
       k_xh_x^{-1},&x\in W,\\
       \id,&x\in B\setminus A.
      \end{cases}
\end{equation}
The norm of $k_xh_x^{-1}-\id$ in \eqref{eq:smoothing-relative} is taken in
$E_B$, whose metric is defined throughout $B$. Since $\varepsilon$ vanishes
on the complement,
$a$ tends to the identity at every boundary point of $W$. It is therefore
a continuous equivariant automorphism on $B$, with continuous inverse by
matrix inversion.

Pull back the smooth structure of $E_B$ by $a$: an old smooth local frame
$e_1,\ldots,e_r$ is replaced by $a^{-1}e_1,\ldots,a^{-1}e_r$. This keeps
the underlying topology. For every groupoid arrow $\gamma$ over $B$, equivariance
gives
\[
 a_{t\gamma}\alpha_\gamma a_{s\gamma}^{-1}=\alpha_\gamma,
\]
so the original groupoid action is smooth in the new structure. On $W$,
$a h=k$ makes the identity $h$ a smooth isomorphism from $E_A$ to the new
$E_B$. On the orbifold site their smooth-section sheaves agree on the overlap,
and hence glue to a smooth structure over $A\cup B$ which leaves $E_A$
unchanged.

Choose a countable cover $\{B_i\}$ with the local smooth structures above.
Successive pairwise gluings give smooth-section sheaves $\mathcal S_j$ on
$A_j=\bigcup_{i\leq j}B_i$ satisfying
\begin{equation}\label{eq:smoothing-stationary}
 \mathcal S_{j+1}|_{A_j}=\mathcal S_j.
\end{equation}
Every point has an open neighbourhood whose smooth structure is fixed
after a finite step. The same holds near every arrow, since the sets $A_j$
are saturated. These sheaves therefore define a smooth bundle $E^\infty$
with the original topology and groupoid action. Choose an invariant smooth Hermitian metric
on $E^\infty$ and give $E^0$ the same metric. The identity of the underlying
continuous bundle is the required equivariant unitary isomorphism.
\end{proof}

\section{Frames and compactness}\label{sec:frames}

Proposition~\ref{prop:smoothing} applied to Theorem~\ref{thm:regular} proves
Theorem A. Put $R=R(n,B)$. Since regular representations are faithful, the
standard frame construction applies; see \cite[\S4]{Kalisnik}.

Choose a countable Hausdorff proper \'etale presentation
$\Gamma\rightrightarrows U$ from a countable cover by uniformizers.
The atlas $U$ is second-countable, as is its frame bundle $P=\Fr_U(E)$,
which is locally trivial with fibre $U(R)$. The commuting actions are
\[
 g\cdot f=\rho_g\circ f,
 \qquad f\cdot A=f\circ A,
 \qquad f:\C^R\xrightarrow{\cong}E_{s(g)}.
\]
The $\Gamma$-action is free: an arrow fixing a frame acts trivially on its
fibre and hence is the identity. It is proper because, for
\[
 \Gamma\mathbin{{}_s\!\times_{\pi_P}}P\longrightarrow P\times P,
 \qquad(g,f)\longmapsto(g\cdot f,f),
\]
the inverse image of a compact set has $f$ in a compact subset of $P$ and
$(s(g),t(g))$ in the compact image under the two base projections. Properness
of $(s,t)$ then puts $g$ in a compact subset of $\Gamma$, and the inverse
image is closed in the resulting compact product.

Thus $\Gamma\ltimes P$ is free, proper, and \'etale. Its quotient
$M=\Gamma\backslash P$ is a Hausdorff smooth manifold, locally the quotient
of a frame chart by a free finite-group action. The quotient map is open, so
$M$ is second-countable. The right $U(R)$-action descends and is proper because
$U(R)$ is compact. If $m=[f]$ lies over $x$,
then
\begin{equation}\label{eq:framestabilizer}
 U(R)_m=\{\,f^{-1}\rho_gf:g\in G_x\,\}.
\end{equation}
Indeed, $[fA]=[f]$ exactly when $fA=\rho_gf$ for a unique isotropy arrow $g$.
The action is therefore locally free. The two commuting actions make $P$ a Morita bibundle between $\Gamma$ and
$U(R)\ltimes M$: $P\to U$ is the ordinary principal frame bundle and
$P\to M$ is principal for $\Gamma$. Hence
\[
 \X\simeq[M/U(R)]
\]
with full stabilizers identified by \eqref{eq:framestabilizer}. The quotient
map of a compact-group action on a locally compact Hausdorff space is closed
with compact fibres, hence proper. Therefore $M$ is compact exactly when
$M/U(R)=|\X|$ is compact. This proves Corollary B.

\begin{corollary}[Compact orbifolds]
Every compact smooth orbifold without boundary, possibly ineffective, has a
smooth regular bundle and is a quotient of a compact smooth manifold by a
locally free unitary-group action.
\end{corollary}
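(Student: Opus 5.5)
The corollary follows by checking that a compact smooth orbifold satisfies the hypotheses of Theorem~A and Corollary~B, and then reading off their conclusions. Let $\X$ be compact, without boundary, and possibly ineffective.

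\emph{Dimension bound.} The coarse space $|\X|$ is covered by finitely many uniformizer footprints $[U_i/Q_i]$. Each chart has locally constant dimension, and compactness leaves only finitely many components. So $\dim\X\leq n$ for
\[
 n=\max_i\dim U_i .
\]

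\emph{Stabilizer bound.} For $x$ in the footprint of $[U_i/Q_i]$, the full stabilizer $G_x$ embeds in $Q_i$, including ineffective kernel elements. Hence $|G_x|\leq B$ for
\[
 B=\max_i|Q_i| .
\]

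\emph{Countability and separation.} The Hausdorff and second-countability hypotheses are part of the paper's standing definition of an orbifold. In any case, a finite atlas of second-countable charts gives second countability.

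\emph{Conclusion.} Theorem~A now gives a smooth Hermitian bundle $E$ of rank $R(n,B)$ with $E_x\cong\C[G_x]^{\oplus R(n,B)/|G_x|}$. This is a smooth regular bundle in the sense of the introduction, since $R(n,B)/|G_x|$ is a positive integer because $|G_x|\mid L\mid R(n,B)$. Corollary~B then gives a smooth proper locally free action of $U(R(n,B))$ on a manifold $M$ with $\X\simeq[M/U(R(n,B))]$. By the same corollary, $M$ is compact exactly when $|\X|$ is compact, and $|\X|$ is compact by hypothesis.

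The only step needing care is the stabilizer bound: one must use the full chart group $Q_i$ rather than its effective quotient, so that ineffective isotropy is counted in $B$. The rest is direct invocation of the earlier results.
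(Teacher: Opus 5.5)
Your proposal is correct and follows the paper's proof. The paper also uses a finite uniformizer cover to bound the component dimensions and the full stabilizer orders, then applies Theorem A and Corollary B; your added points are valid elaborations of the same argument, namely counting ineffective isotropy via the full chart group $Q_i$ and checking $|G_x|\mid L\mid R(n,B)$.
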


\begin{proof}
A finite uniformizer cover bounds the stabilizer orders and component
dimensions; apply Theorem A and Corollary B.
\end{proof}

For example, on $BH$ a faithful representation
$\rho:H\hookrightarrow U(R)$ gives $M=\rho(H)\backslash U(R)$, whose right
stabilizer at the identity coset is $\rho(H)$; thus
$[M/U(R)]\simeq BH$. Here the chart group acts freely on frames, while the compact-group action
on $M$ has stabilizers isomorphic to $H$.

The order bound in Theorem A is necessary for its regular-fibre conclusion:
a positive multiple of $\C[G_x]$ in rank $R$ forces $|G_x|\leq R$. This argument concerns regular bundles; it does not establish a necessary
order bound for faithful bundles or compact-group presentations.

\section*{Acknowledgements}
This work forms part of Enrique Becerra's doctoral thesis at CINVESTAV, written
under the supervision of Ernesto Lupercio.

E.L. was supported by CONAHCYT grant CB-2017-2018-A1-S-30345, the Simons
Foundation under grant SFI-MPS-T-Institutes-00007697, and the Ministry of
Education and Science of the Republic of Bulgaria under grant
DO1-239/10.12.2024. Most of this work was completed during his sabbatical leave
from CINVESTAV\@. He thanks the International Center for Mathematical
Sciences--Sofia at the Institute of Mathematics and Informatics, Bulgarian
Academy of Sciences, and the Institute for the Mathematical Sciences of the
Americas at the University of Miami for hospitality and support.

\begin{samepage}
ChatGPT (OpenAI) was used to check arguments and references, edit the text,
and prepare the LaTeX source. The mathematical ideas are due to the authors.
The authors checked the final manuscript and take responsibility for it.
\par
\end{samepage}

\bibliographystyle{amsplain}
\bibliography{regular_bundles_on_orbifolds}
\end{document}